\documentclass[12pt]{article}

\usepackage{amsthm}
\usepackage[english]{babel}
\usepackage[utf8]{inputenc}
\usepackage{amsmath}
\usepackage{amsthm}
\usepackage{graphicx}
\usepackage[colorinlistoftodos]{todonotes}
\usepackage{wasysym}
\usepackage{amsfonts}
\usepackage{ amssymb }
\usepackage{tabu}
\usepackage{mathtools}
\usepackage{esvect}
\usepackage{amsmath,graphicx}
\usepackage{amsmath,lipsum}
\usepackage{enumitem}
\usepackage{physics}
\usepackage{tikz}
\usetikzlibrary{trees}
\usepackage{centernot}
\usepackage{extsizes}

\theoremstyle{plain}
\usepackage{geometry}
 \geometry{
 a4paper,
 total={170mm,257mm},
 left=20mm,
 top=20mm,
 }

\title{On the Expected Value of the Maximal Bet in the Labouchere System}

\author{Nina Zubrilina}

\date{\today}

\begin{document}

\maketitle
\theoremstyle{definition}
\newtheorem{theorem}{Theorem}[section]
\newtheorem{observation}[theorem]{Observation}
\newtheorem{corollary}[theorem]{Corollary}
\newtheorem{lemma}[theorem]{Lemma}
\newtheorem{lemma*}{Lemma}
\newtheorem{defn}[theorem]{Definition}
\newtheorem{remark}[theorem]{Remark}

\newtheorem*{probaux}{Problem \protect\probnumber}
\newenvironment{prob}[1]{\def\probnumber{#1}\probaux \mbox{}\\}{\endprobaux}

\newenvironment{letter}{\begin{enumerate}[a)]}{\end{enumerate}}

\newlist{num}{enumerate}{1}
\setlist[num, 1]{label = (\alph*)}
\newcommand{\myitem}{\item}

\newcommand{\Prob}{\mathbb{P}}
\newcommand{\E}{\mathbb{E}}
\newcommand{\Q}{\mathbb{Q}}
\newcommand{\R}{\mathbb{R}}
\newcommand{\N}{\mathbb{N}}
\newcommand{\Z}{\mathbb{Z}}
\newcommand{\F}{\mathbb{F}}
\newcommand{\C}{\mathbb{C}}
\renewcommand{\F}{\mathbb{F}}
\newcommand{\Cl}{\operatorname{Cl}}
\newcommand{\disc}{\operatorname{disc}}
\newcommand{\Gal}{\operatorname{Gal}}
\newcommand{\Img}{\operatorname{Im}}
\newcommand{\eps}{\varepsilon}

\newcommand\ang[1]{\left\langle#1\right\rangle}

\renewcommand{\d}{\partial}
\newcommand*\Lapl{\mathop{}\!\mathbin\bigtriangleup}
\newcommand{\mat}[4] {\left(\begin{array}{cccc}#1 & #2\\ #3 & #4 \end{array}\right)}
\newcommand{\mattwo}[2] {\left(\begin{array}{cc}#1\\ #2 \end{array}\right)}
\newcommand{\pphi}{\varphi}
\newcommand{\eqmod}[1]{\overset{\bmod #1}{\equiv}}
\renewcommand{\O}{\mathcal{O}}
\newcommand{\fr}[1]{\mathfrak{#1}}
\newcommand{\p}{\mathfrak{p}}
\newcommand{\m}{\mathfrak{m}}
\newcommand{\limto}[1]{\xrightarrow[#1]{}}

\newcommand{\under}[2]{\mathrel{\mathop{#2}\limits_{#1}}}

\newcommand{\underwithbrace}[2]{  \makebox[0pt][l]{$\smash{\underbrace{\phantom{%
    \begin{matrix}#2\end{matrix}}}_{\text{$#1$}}}$}#2}

\begin{abstract}
Consider a game consisting of independent turns with even money payoffs in which the player wins with a fixed probability $p \geq 1/3$ and loses with probability $1 - p$. The Labouchere system is a betting strategy which entails keeping a list of positive real numbers and betting the sum of the first and the last number on the list at every turn. In case of a victory, those two numbers are erased from the list, and, in case of a loss, the bet amount is appended to the end of the list. The player finishes the game when the list becomes empty. It is known that, in a game played with the Labouchere system with $p \leq 1/2$, both the sum of the bets and the maximal deficit have infinite expectation. Grimmett and Stirzaker raised the question of whether the same is true for the maximal bet. In this paper we show the expectation of the maximal bet is finite for $p > c$, where $c \approx 0.613763$ solves $(c-1)^2 c = \frac{8}{27(1+\sqrt{5})}$, thus partially answering this question. 
\end{abstract}

\section{Introduction}
Let $L = (\ell_1, \ldots, \ell_j)$ with $\ell_i \in \R_{> 0} $ be a list of length $j$ and let $b(L)$ be the sum of the first and last terms in the list, or, more specifically, 
\[
b(L) := 
\begin{cases}
0 &\text{ if } j = 0,\\
\ell_1&\text{ if } j = 1,\\
\ell_1 + \ell_j&\text{ if } j\geq 2.
\end{cases}
\]
Let $X_1, X_2, \ldots$ be i.i.d.\hspace{0.03in}random variables with $P(X_i = 1) = p \in [1/3, 1], \ P(X_i = -1) = q := 1-p.$ Given a list $L = (\ell_1, \ldots, \ell_j)$, the list $L[X_i]$ is constructed as follows: \\

\[
L[1]  : = 
\begin{cases}
\emptyset &\text{ if } j \leq 2;\\
(\ell_2, \ldots, \ell_{j- 1}) &\text{ if } j \geq 3;\\
\end{cases}
\hspace{0.16in}
\]
\[
L[-1]: = 
\begin{cases}
\emptyset &\text{ if }  j =0;\\
(\ell_1, \ldots, \ell_{j}, b(L)) &\text{ if }  j \geq 1;\\
\end{cases}
\]
This construction is used in the \emph{Labouchere system}\footnote{Henry Labouchere is most remembered for the Labouchere Amendment, which was used to prosecute Oscar Wilde, Alan Turing, and many others.} --- a betting strategy for games where at every turn, we win with probability $p$ and lose with probability $q  = 1-p$. Starting with a list $L$ as above, bet the sum of the first and the last entries of the list, i.e. $b(L)$. If we win, we erase the first and last entry from the list, and, if we lose, we append the lost amount to the end of the list.

For a given game (i.e. sequence $X_1, X_2, \ldots $ with $X_i \in \{-1, 1\})$ with a starting list $L_0$, let $L_1, \ldots, L_N$ be the sequence of lists appearing in this game, $L_{i + 1} := L_i[X_{i+1}]$. Here the \emph{stopping time} $N$ is the number of turns taken before the list becomes empty. For $p \in [1/3, 1]$, $N$ is finite with probability $1$ since $-2p + q \leq 0$. 

It has been shown by Grimmett and Stirzaker \cite[Problem 12.9.15]{grimmett}, that for $p \in (1/3, 1/2]$, $E[N] < \infty$, and for $p \in [1/3; 1/2]$, the total bet amount and the maximal deficit of the player have infinite expectations, i.e.
$$ E(b(L_0) + \ldots + b(L_{N-1}))=  \infty,  \ E(\inf_m X_1 b(L_0) + \ldots +  X_m b(L_m)) = - \infty.$$
They also made an unproved assertion that $M := \sup_m b(L_m)$ has infinite expectation as well \cite[Problem 12.9.15]{grimmettq}. Motivated by this assertion, Ethier conjectured in private communication that $E[M] = \infty$ for $p \in [1/3; 1/2]$ and $E[M] < \infty$ for $p \in (1/2; 1]$. The main objective of this paper is to address the second half of this conjecture.

We approach the question as follows. In section $2$, we summarize some previous results about the dependence of the stopping time $N$ on the initial list. In section $3$, we bound $M$ from above by a function of $N$ for games starting with the list $(1, 2)$. Lastly, combining the two results, in section $4$ we conclude that $E[M]$ is finite for $p \in (c; 1]$, where $c$ is a root of $(x-1)^2x = \frac{8}{27(1 + \sqrt{5})}$, $c \approx 0.613763$ for the starting list $(1, 2)$ and show that the question of finiteness of $E[M]$ for an arbitrary starting list $L$ reduces to the same question for the list $(1, 2)$, thus concluding the proof of the main Theorem: 
\begin{theorem}\label{main}
Let $p \in (c; 1]$, where $c$ is the root of $(x-1)^2x = \frac{8}{27(1 + \sqrt{5})}$ lying in $(1/3, 1]$, $c \approx 0.613763$. Then 
$$E[M] < \infty.$$ 

\end{theorem}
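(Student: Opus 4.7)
The plan is to first establish $\E[M]<\infty$ for the specific starting list $L_0=(1,2)$ and then reduce the general case to this one, mirroring the section outline of the paper. The key strategy is to bound $M$ deterministically by a function of the stopping time $N$, and then use generating-function methods to show that this function has finite expectation exactly under the hypothesis $p>c$.

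\textbf{Fibonacci-type bound on bets.} For the game starting from $(1,2)$, I would show that $M\le C\,\varphi^{N/3}$ almost surely, where $\varphi=(1+\sqrt{5})/2$. The underlying heuristic is that the extremal pattern for bet growth is the triple loss-loss-win: starting from a two-element state $(a,b)$ with $a\le b$, two losses enlarge the list to $(a,b,a+b,2a+b)$ and the subsequent win compresses it back to $(b,a+b)$, so every three turns the state undergoes the Fibonacci substitution $(a,b)\mapsto(b,a+b)$, and the peak bet inside the cycle is $3a+b$, asymptotically a factor $\varphi$ larger than $a+b$. I would make this rigorous by identifying a suitable monovariant---for instance, a weighted combination $\alpha\,\ell_1+\beta\,\ell_j$ with Fibonacci-ratio weights---that can be shown to grow by at most $\varphi^{1/3}$ per turn regardless of the outcome, then concluding $b(L_m)\le 2C\,\varphi^{m/3}\le 2C\,\varphi^{N/3}$ for every $m\le N$. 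Ruling out adversarial patterns that beat the LLW rate is the main obstacle in this step.

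\textbf{Finite $\varphi^{1/3}$-moment of $N$.} The list length evolves essentially as a random walk with increments $+1$ of probability $q$ and $-2$ of probability $p$, and $N$ is its first passage time from level $2$ to $\{0,-1\}$, modulo boundary corrections at length $1$. For $T_k$ the analogous first passage from level $k$, the probability generating functions $u_k(z)=\E[z^{T_k}]$ satisfy $u_k(z)=pz\,u_{k-2}(z)+qz\,u_{k+1}(z)$ with $u_0=u_{-1}=1$. The ansatz $u_k=r^k$ yields the characteristic cubic $qz\,r^3-r^2+pz=0$, whose discriminant $\Delta(z)=pz\bigl(4-27pq^2z^3\bigr)$ vanishes at $z_c:=\bigl(4/(27pq^2)\bigr)^{1/3}$, and $z_c$ is precisely the radius of convergence of $\E[z^N]$. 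The inequality $\varphi^{1/3}<z_c$ rearranges to $pq^2<\tfrac{8}{27(1+\sqrt{5})}$, i.e.\ (using monotonicity of $pq^2=p(1-p)^2$ on the relevant interval) to $p>c$. Combined with the pointwise bound above, $\E[M]\le 2C\,\E[\varphi^{N/3}]<\infty$.

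\textbf{Reduction to $(1,2)$.} Finally, to pass from the $(1,2)$ case to an arbitrary $L_0=(\ell_1,\ldots,\ell_k)$, I would use that the Labouchere dynamics depend only on the first and last entries, and that within finitely many turns---with all quantitative bounds under control---the list either empties or reduces to length $2$; on the latter event the strong Markov property lets one couple the tail of the game with a scaled copy of the $(1,2)$-game. This reduction is expected to be routine once the $(1,2)$ case is established; the main difficulty of the whole argument lies in the Fibonacci pointwise bound of the first step.
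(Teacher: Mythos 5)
Your write-up is a plan rather than a proof, and the step you yourself flag as "the main obstacle" — the pointwise bound $M\ll \varphi^{N/3}$ (up to polynomial factors) — is exactly the mathematical content of the theorem, so leaving it unresolved is a genuine gap, not a routine detail. Moreover, the specific mechanism you propose for it looks unworkable: no linear functional $\alpha\,\ell_1+\beta\,\ell_j$ of the first and last entries (with nonnegative weights) can grow by at most $\varphi^{1/3}\approx 1.174$ on \emph{every} turn. For example, to absorb the loss $(1,2)\mapsto(1,2,3)$ one needs $\alpha\gtrsim 3.75\,\beta$, but then the reachable winning turn $(1,2,3,4,5)\mapsto(2,3,4)$ multiplies the functional by at least about $1.31$. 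The Fibonacci rate $\varphi$ per three turns is only an amortized phenomenon, and the turns of an actual game need not come in \textbf{LLW} blocks, so per-turn accounting cannot work. The paper's proof replaces per-turn accounting by accounting indexed by \emph{wins}: it tracks $f_t$, the first list entry just after the $t$-th win, proves that the structural property $\ell_{i+1}\le\ell_i+\ell_1$ (hence $\ell_i\le i\,\ell_1$) is preserved by the dynamics, deduces $f_{t+1}(s)\le f_t(s)+f_{t-1}(s)$ for every playable sequence, and concludes that the all-\textbf{LLW} game is extremal, $f_t(s)\le F_{t+2}\ll\varphi^t$; combined with the length bound $\le 2m+2$ this gives $M\ll N\varphi^{N/3}$ (note the extra polynomial factor, harmless for convergence). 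Some argument of this amortized type is what your first step is missing.

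Your second step is a legitimate alternative to the paper's route (the paper plugs the bound into Ethier's exact distribution of $N$, which for the list $(1,2)$ gives $P_2(N=3m)\asymp\binom{3m}{m}p^{m+1}q^{2m-1}$ up to polynomial factors, and convergence of $\sum(27\varphi/4)^m(pq^2)^m$ reproduces your threshold $pq^2<8/(27(1+\sqrt5))$); your generating-function version would still need you to justify the boundary condition at level $1$, select the correct root of the cubic, and prove that the square-root singularity at $z_c=(4/(27pq^2))^{1/3}$ really is the radius of convergence of $\E[z^N]$, but these are standard. Your reduction step, however, has the same kind of gap as the first: waiting until the list shrinks to length $2$ does not control the bets made \emph{before} that time, and that initial phase can be arbitrarily long with unboundedly large entries, so it is not "routine"; also the length-$2$ list you reach need not be proportional to $(1,2)$. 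The paper's reduction goes in the opposite direction and avoids this entirely: with probability $q^{j-2}$ the $(1,2)$-game passes through $(1,2,\ldots,j)$, so $\E_{(1,2)}[M]\ge q^{j-2}\E_{(1,\ldots,j)}[M]$ forces $\E_{(1,\ldots,j)}[M]<\infty$, and then an arbitrary list $(\ell_1,\ldots,\ell_j)$ is dominated entrywise by a scaled copy $(c,2c,\ldots,jc)$, with domination preserved under play (Lemma 3.3.2), giving $\E_L[M]\le c\,\E_{(1,\ldots,j)}[M]<\infty$; the length-$1$ case is handled separately.
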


Notation: By $f(x) \ll g(x),$ we mean that there exist $C, X$, such that for all $x > X,$ $\abs{f(x)} < C\abs{g(x)}.$

\section{Random walks on $\Z_+$}

\hspace{.22in}The lengths of the lists that appear in a Labouchere game can be modeled by a random walk in $\Z_+$ with initial state $ j$ and absorbing state $0$ with transition probabilities 
\[
P(h, k) = 
\begin{cases}
p &\text{ if } k = \max(h - 2, 0), \\
 q&\text{ if } k = h + 1, \hspace{1in}  \\
0 &\text{ otherwise}
\end{cases}
\hspace{0.2in} h \geq 1.
\]
The position of the walk after the $n^\text{th}$ step corresponds to the length of the list after $n$ turns. In the same way as before, we define the stopping time $N$ to be the number of steps before getting to the absorbing state $0$. 

We will use $P_{j}$ to denote probabilities conditional on starting the walk at the point $j \in \Z_{>0}$.
We will use a part of a Theorem of Ethier deduced from certain extensions of the ballot theorem:
\begin{theorem}[{{\cite[Theorem 2]{ethier}}}]
$$P_1(N = 3m + 1) = \frac{1}{2m + 1} {3m \choose m} p^{m + 1} q^{2m}\text{ for } m \geq 0,$$
$$P_1(N = 3m + 2) = \frac{1}{m + 1} {3m + 1 \choose m} p^{m + 1} q^{2m + 1}\text{ for } m \geq 0$$ (and $P_1(N = 3m) = 0 $ for $m \geq 1$).
\end{theorem}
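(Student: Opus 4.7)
The plan is to recast the walk on $\Z_+$ as the unrestricted walk on $\Z$ with i.i.d.\ increments $-2$ (probability $p$) or $+1$ (probability $q$), stopped at the first entry into $\{0,-1\}$. Since every step is $+1$ or $-2$, the walk cannot skip over $\{0,-1\}$, so the reflection at state $1$ in the original chain has no effect on the stopping time: absorptions at $0$ from states $2$ and $1$ become, respectively, first hits of $0$ and $-1$ in the unrestricted walk.

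Writing $N = k+\ell$ with $k$ the number of down steps and $\ell$ the number of up steps, the terminal position $1 + \ell - 2k$ must lie in $\{0,-1\}$. Setting $k = m+1$ gives the two possibilities $\ell = 2m+1$, $N = 3m+2$ (ending at $0$) and $\ell = 2m$, $N = 3m+1$ (ending at $-1$); no other values of $N$ are admissible, so $P_1(N = 3m) = 0$ for $m \geq 1$. In the first case the initial segment of length $3m+1$ is a walk from $1$ to $2$ with $m$ downs and $2m+1$ ups staying $\geq 1$ throughout; in the second, the initial segment of length $3m$ is a walk from $1$ to $1$ with $m$ downs and $2m$ ups staying $\geq 1$ throughout. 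After shifting positions down by $1$, these become non-negative $\{+1,-2\}$-walks from $0$ to $1$ and from $0$ to $0$, respectively.

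I would finish by invoking the ballot-style identity that the number of non-negative $\{+1,-2\}$-walks of length $n$ from $0$ to $r$ with $d$ down steps equals $\tfrac{r+1}{n+1}\binom{n+1}{d}$. This specializes to $\tfrac{1}{m+1}\binom{3m+1}{m}$ in the first case and to the Fuss--Catalan number $\tfrac{1}{2m+1}\binom{3m}{m}$ in the second. The identity itself follows from the Dvoretzky--Motzkin cycle lemma applied after prepending a $+1$ step, which turns a non-negative path of length $n$ ending at $r$ into a strictly positive path of length $n+1$ with sum $r+1$; the cycle lemma then produces exactly $r+1$ good rotations among $n+1$. Multiplying the two counts by $p^{m+1}q^{2m}$ and $p^{m+1}q^{2m+1}$ respectively yields the two asserted formulas. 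The main technical obstacle is the bookkeeping around the boundary: the walk may touch level $1$ without being absorbed, so the translated path need only stay $\geq 0$ rather than $\geq 1$, which is what forces the prepending trick rather than a direct application of the cycle lemma.
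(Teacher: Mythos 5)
Your argument is correct, but note that the paper itself offers no proof of this statement: it is quoted verbatim from Ethier's paper (Theorem 2 of the cited reference), with only the remark that it is ``deduced from certain extensions of the ballot theorem.'' Your proposal is therefore a self-contained derivation in exactly the spirit that the citation gestures at. The key reductions all check out: since the steps are $+1$ or $-2$, the chain cannot jump over $\{0,-1\}$, so replacing absorption at $0$ by first entry of the free walk into $\{0,-1\}$ preserves the stopping time and path probabilities; the parity bookkeeping correctly rules out $N\equiv 0 \pmod 3$ and splits the two cases by terminal state; and the counts
$\frac{2}{3m+2}\binom{3m+2}{m}=\frac{1}{m+1}\binom{3m+1}{m}$ and $\frac{1}{3m+1}\binom{3m+1}{m}=\frac{1}{2m+1}\binom{3m}{m}$
match the stated coefficients after multiplying by $p^{m+1}q^{2m+1}$ and $p^{m+1}q^{2m}$ respectively. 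Your identification of the real subtlety (the walk may revisit level $1$, so the shifted path is only constrained to stay $\geq 0$, forcing the prepend-a-$+1$ trick before applying the cycle lemma) is exactly right. The one point worth writing carefully in a full version is the passage from the cycle lemma (``exactly $r+1$ of the $n+1$ rotations of a fixed word are good'') to the global count $\frac{r+1}{n+1}\binom{n+1}{d}$: words with nontrivial cyclic symmetry are handled cleanly by double counting pairs (word, rotation index) rather than by arguing orbit by orbit, and one should also observe that a strictly positive word necessarily begins with $+1$, so the prepending map is onto. With those details spelled out, your proof is complete and gives the quoted formulas, including the base cases $P_1(N=1)=p$ and $P_1(N=2)=pq$.
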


Since we will be primarily working with a starting list of length $2$, we state a simple corollary:
\begin{corollary}\label{lengthtwo}

$$P_2(N = 3m ) = \frac{1}{2m + 1} {3m \choose m} p^{m + 1} q^{2m-1} \text{ for } m \geq 1,$$
$$P_2(N = 3m + 1) = \frac{1}{m + 1} {3m + 1 \choose m} p^{m + 1} q^{2m}\text{ for } m \geq 0$$ (and $P_1(N = 3m + 2) = 0 $ for $m \geq 0$).

\end{corollary}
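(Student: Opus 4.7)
The plan is to reduce the claim to the theorem of Ethier (Theorem 2.1) via a one-step Markov decomposition. Starting from state $1$ in the random walk of Section 2, a victory (probability $p$) sends the walker directly to the absorbing state $0$, whereas a loss (probability $q$) sends it to state $2$; by the Markov property, conditioning on the outcome of the first step yields
\[
P_1(N = n) \;=\; p \cdot \mathbf{1}[n = 1] \;+\; q \cdot P_2(N = n - 1), \qquad n \geq 1.
\]
Rearranging gives the transfer identity
\[
P_2(N = k) \;=\; q^{-1}\bigl(P_1(N = k + 1) - p \cdot \mathbf{1}[k = 0]\bigr), \qquad k \geq 0,
\]
which expresses every probability appearing in the corollary in terms of quantities already computed in Theorem 2.1.

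Once this relation is in place, the rest is pure index bookkeeping against the three cases of Theorem 2.1. For $k = 3m$ with $m \geq 1$ one has $k + 1 = 3m + 1$, so $P_1(N = k + 1)/q = \frac{1}{2m + 1}\binom{3m}{m} p^{m + 1} q^{2m - 1}$, matching the first formula. For $k = 3m + 1$ with $m \geq 0$ one has $k + 1 = 3m + 2$, so $P_1(N = k + 1)/q = \frac{1}{m + 1}\binom{3m + 1}{m} p^{m + 1} q^{2m}$, matching the second. For $k = 3m + 2$ with $m \geq 0$, $k + 1 = 3(m + 1)$ is a positive multiple of $3$ and Theorem 2.1 gives $P_1(N = k + 1) = 0$, producing the vanishing claim. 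The $k = 0$ case correctly yields $P_2(N = 0) = 0$, since the indicator exactly cancels the atom $P_1(N = 1) = p$ coming from an immediate win at state $1$.

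There is no substantive obstacle here: the corollary follows immediately from the single-step conditioning above together with the explicit formulas of Ethier's theorem. The only care required is to track the residue class of $k$ modulo $3$ correctly when performing the index shift $k \mapsto k + 1$, and to verify the edge case $k = 0$ where the indicator term is nonzero.
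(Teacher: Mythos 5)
Your proof is correct and takes essentially the same route as the paper: its one-line proof is the same first-step reduction (a walk from state $1$ lasting more than one step must begin with a loss to state $2$), phrased as a correspondence of walks, while you make the factor of $q$ and the $k=0$ edge case explicit. You also correctly treat the corollary's ``$P_1(N=3m+2)=0$'' as the intended statement $P_2(N=3m+2)=0$.
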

\begin{proof}
For $n > 0$, the number of walks ending after $n$ moves starting at $2$ is equal to the number of walks starting at $1$ and finishing in $n + 1$ moves.
\end{proof}

\section{Bounds on the Maximal Bet in Terms of the Stopping Time}
\hspace{.22in} In this section, we bound the maximal bet in a game with a starting list $L$ from above in terms of the stopping time. We only do it for the games starting on the list $L = (1, 2)$, because this case will imply all the other cases, as shown in Section $4$. 

We denote a winning turn with \textbf{W} and a losing one with \textbf{L}, so a specific game can be described with a sequence of \textbf{W}s and \textbf{L}s.

\begin{defn}
Let $s = s_1\ldots s_k$ with $s_i \in \{\textbf{W}, \textbf{L}\}$ be a sequence of wins and losses. For all $m \leq k$, we say $s': = s_1\ldots s_m$ is a \emph{prefix} of $s$, and if $m < k$, we say $s'$ is a \emph{proper prefix} of $s$.
\end{defn}
\begin{defn}
Let $L$ be a starting list of length $j$ and let $s$ be a sequence of wins and losses. We say $s$ is \emph{playable} on $L$ if for any proper prefix $s' = s_1\ldots s_m$ of $s$, we have 
$$j + \# \abs{\{ i \leq m \ \vert s_i = \textbf{L}\}} - 2 \# \abs{\{ i \leq m \ \vert s_i = \textbf{W}\}}>0,$$ i.e.\ if the series of wins and losses encoded by $s$ can be fully played by a player with a starting list $L$. 
\end{defn}

\begin{lemma}\label{basics}
Let $L = (\ell_1, \ldots, \ell_j)$ and $K = (k_1, \ldots, k_{h})$ be two lists, and let $s$ be a sequence playable on both $K$ and $L$. Let $L' = (\ell_1', \ldots, \ell_{j'}')$ and $K' = (k_1', \ldots, k'_{h'}) $ be the states of the two lists after the sequence $s$ has been played. Then:
\begin{enumerate}
\item If $$0 \leq \ell_1 \leq \ldots \leq \ell_j,$$ then $$0 \leq \ell_1' \leq \ldots \leq \ell_{j'}';$$ 
\item If $h= j$ and $\ell_i \leq k_i$ for all $i$, then $\ell_i' \leq k'_i$ for all $i$;
\item If $0 \leq \ell_1 \leq \ldots \leq \ell_j$ and $$\ell_{i+1} \leq \ell_i + \ell_1 \text{ for all } i \in \{1, \ldots, j-1\},$$ then 
$$\ell'_{i+1} \leq  \ell'_i + \ell'_1 \text{ for all }i \in \{1, \ldots, j'-1\}$$ (and in particular $\ell'_i \leq i \cdot \ell'_1.$)
\end{enumerate}
\end{lemma}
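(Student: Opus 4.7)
The plan is to prove all three parts simultaneously by induction on the length of the sequence $s$. The base case $|s|=0$ is immediate, and in the inductive step it suffices to verify that each of the three properties is preserved under a single playable move, i.e.\ one application of $L \mapsto L[1]$ or $L \mapsto L[-1]$. A key preliminary observation is that the length of the list after $m$ moves depends only on the initial length and on $s_1\ldots s_m$, so if $h=j$ then the two lists $L$ and $K$ in Part 2 remain of equal length throughout the game; this is what makes the coordinatewise comparison well-defined at every step.

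For Part 1, consider the effect of a single move on a non-decreasing list $(\ell_1,\dots,\ell_j)$. A \textbf{W} produces $(\ell_2,\dots,\ell_{j-1})$, which is a subsequence of a non-decreasing sequence, hence non-decreasing. An \textbf{L} appends $b(L)=\ell_1+\ell_j$ (or $\ell_1$ when $j=1$); since $\ell_1\geq 0$, the new last entry is at least $\ell_j$, and monotonicity is preserved. For Part 2, a \textbf{W} applied to both lists drops the first and last coordinates, and pointwise dominance on the middle coordinates is inherited. An \textbf{L} appends $\ell_1+\ell_j$ and $k_1+k_h$ respectively, and the hypothesis $\ell_1\leq k_1$, $\ell_j\leq k_h$ gives $\ell_1+\ell_j\leq k_1+k_h$, so dominance extends to the new final coordinate.

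For Part 3, I would use Part 1 alongside the induction hypothesis. A \textbf{W} sends the list to $(\ell_2,\dots,\ell_{j-1})$; the hypothesis $\ell_{i+1}\leq \ell_i+\ell_1$ together with $\ell_1\leq \ell_2$ (from Part 1) yields $\ell_{i+1}\leq \ell_i+\ell_2$, which is precisely the required inequality for the new list with new first element $\ell_2$. An \textbf{L} produces $(\ell_1,\dots,\ell_j,\ell_1+\ell_j)$; the only new inequality to verify is at the final pair, namely $\ell_1+\ell_j\leq \ell_j+\ell_1$, which holds with equality. The parenthetical bound $\ell'_i\leq i\cdot \ell'_1$ then follows by a one-line induction on $i$: if $\ell'_i\leq i\ell'_1$, then $\ell'_{i+1}\leq \ell'_i+\ell'_1\leq (i+1)\ell'_1$.

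The argument is structurally routine; the main things to be careful about are the degenerate cases ($j\in\{0,1,2\}$), where $b(L)$ has its special definitions and where a \textbf{W} may leave an empty list. In each of these cases the statement either becomes vacuous or reduces to the inequality $\ell_1\geq 0$, so playability of $s$ on both $L$ and $K$ guarantees that we never need to perform a move on an empty list, and the inductive step goes through.
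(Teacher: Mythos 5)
Your proposal is correct and follows essentially the same route as the paper: reduce to a single move and check that a \textbf{W} passes to an inner subsequence (using $\ell_1\leq\ell_2$ for Part 3) while an \textbf{L} appends $\ell_1+\ell_j$, which preserves all three properties, then induct on the length of $s$. Your extra care with the degenerate short-list cases and the explicit derivation of $\ell'_i\leq i\,\ell'_1$ are fine refinements of the same argument.
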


\begin{proof}
We show that these properties hold when $s$ consists of just one turn, and for general $s$ the statement follows inductively. 
\begin{enumerate}
\item If the turn is winning, the new list is $(\ell_2, \ldots, \ell_{j - 1}) $ so the ordering remains. If it is losing, the new list is $(\ell_1, \ldots, \ell_j, \ell_{j} + \ell_1),$ and by assumption $\ell_j + \ell_1 \geq \ell_j.$
\item If the turn is winning, we just take out the first and last element from both lists, so $\ell_i' = \ell_{i + 1} \leq k_{i + 1} = k_i'$ for all $i$. If the turn is losing, the new lists are $$L' = (\ell_1, \ldots, \ell_j, \ell_{j} + \ell_1), \ K' = (k_1, \ldots, k_j, k_{j} + k_1),$$ and $\ell_j + \ell_1 \leq k_j + k_1,$ so the property holds again.
\item If the turn is winning, we have $\ell'_i = \ell_{i+1}, \ell'_1 = \ell_2 \geq \ell_1$, and hence $\ell'_{i +1} - \ell'_i \leq \ell_1 \leq \ell_2 = \ell'_1.$ If the turn is losing, we append $\ell_{j} + \ell_1$ to the end of the list, so clearly $\ell'_{j + 1} = \ell_j + \ell_1 = \ell'_j + \ell'_1.$
\end{enumerate} 
\end{proof}

\begin{defn}
We define $S_m$ to be the set of all sequences with exactly $m$ wins and $2m$ losses that are playable on the starting list $L = (1, 2)$, and we let $$s^*_m := \underwithbrace{m  \ \textbf{LLW}s }{\textbf{LLWLLW}\cdots\textbf{ LLW }}\in S_m.$$ 
\end{defn}

\vspace{0.2in}
\begin{defn}
Let $s = s_1\ldots s_k$ with $s_i \in \{\textbf{W}, \textbf{L}\}$ be a sequence playable on $(1, 2)$ with exactly $m$ wins (but not necessarily $2m$ losses), and let $i_1 < i_2 < \ldots < i_m$ be the indices of the wins in the sequence. For $1 \leq t \leq m$, we define $f_t(s)$ to be the  first number on the updated list once the prefix $s_1\ldots s_{i_t}$ has been played out, and we let $f_0(s):= 1$.
\end{defn}

\begin{lemma}\label{starprops}
$$f_t(s^*_m) \ll \pphi^t,$$
where $\pphi = \frac{1 + \sqrt{5}}{2}$ is the golden ratio. 
\end{lemma}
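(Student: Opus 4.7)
The plan is to show that the sequence $\textbf{LLW}$, when played on a two-element list, acts as one step of the Fibonacci recurrence, so that iterating it $t$ times on $(1,2)$ produces a list whose first entry is the $(t{+}2)$-nd Fibonacci number, which grows exactly like $\pphi^t$.

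First I would compute directly what happens to an arbitrary two-element list $(a, b)$ when $\textbf{LLW}$ is played. The bet on $(a,b)$ is $a + b$, so after the first $\textbf{L}$ the list is $(a, b, a+b)$; the next bet is $a + (a+b) = 2a+b$, so after the second $\textbf{L}$ the list is $(a, b, a+b, 2a+b)$; then the $\textbf{W}$ deletes the first and last entries, leaving $(b, a+b)$. Thus the action of $\textbf{LLW}$ on length-$2$ lists is exactly the Fibonacci shift $(a,b) \mapsto (b, a+b)$.

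Next, starting from $(1, 2)$ and iterating this shift $t$ times, a straightforward induction gives the list $(F_{t+2}, F_{t+3})$, where $F_n$ denotes the $n$-th Fibonacci number with $F_1 = F_2 = 1$. By definition $f_t(s^*_m)$ is the first entry of the list after the prefix $(\textbf{LLW})^t$ has been played, so $f_t(s^*_m) = F_{t+2}$.

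Finally, Binet's formula gives
\[
F_{t+2} = \frac{\pphi^{t+2} - \psi^{t+2}}{\sqrt{5}}, \qquad \psi = \frac{1 - \sqrt{5}}{2},
\]
with $|\psi| < 1$, so $F_{t+2} = \frac{\pphi^2}{\sqrt{5}} \pphi^t + o(1)$ and in particular $f_t(s^*_m) \ll \pphi^t$, as desired. There is no real obstacle here; the content of the lemma is the observation that $\textbf{LLW}$ implements the Fibonacci recurrence on length-$2$ lists, and the rest is routine.
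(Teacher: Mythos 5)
Your proposal is correct and follows essentially the same route as the paper: both identify that $\textbf{LLW}$ maps $(x,y) \mapsto (y, x+y)$, conclude $f_t(s^*_m) = F_{t+2}$, and finish with Binet's formula. No gaps.
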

\begin{proof}
Observe that applying the sequence \textbf{LLW} to the list $(x, y)$ results in the list $(y, x + y)$:
$$(x, y) \mapsto (x, y, x + y) \mapsto (x, y, x + y, 2x + y) \mapsto (y, x + y).$$ Hence, 
$$f_{t + 1}(s^*_m) = f_{t}(s^*_m) + f_{t - 1}(s^*_m).$$ Since $f_0(s^*_m) = 1$, $f_1(s^*_m) = 2$, we see that $f_t(s^*_m) = F_{t + 2}$, where $F_k$ is the $k^\text{th}$ Fibonacci number. Thus, by Binet's formula,
$$f_t({s^*_m}) = \frac{\pphi^{t + 2} - (-\pphi)^{-t - 2}}{\sqrt{5}} \ll \pphi^t$$ as aimed.

\end{proof}
We now show that $s^*_m$ is ``maximal" in $S_m$ in the following sense:
\begin{lemma}\label{smallestnumber}
Let $m \geq 1$, $s \in S_m$ and $0 \leq t \leq m$. Then:
$$f_t(s) \leq f_t({s^*_m}).$$
\end{lemma}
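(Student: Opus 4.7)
My plan is to prove this by induction on $t$. A direct induction with hypothesis $f_t(s) \le F_{t+2}$ does not propagate, since after a win the new first entry of the list is the old second entry, and the available bound $a_2 \le 2 a_1$ from Lemma \ref{basics}(3) is too weak. I would therefore strengthen the hypothesis to track both of the first two entries: after the $t$-th win in $s$, writing $L^{(t)}(s) = (a_1, \ldots, a_j)$, one has $a_1 \le F_{t+2}$ and, if $j \ge 2$, also $a_2 \le F_{t+3}$, where $F_k$ denotes the $k$-th Fibonacci number. Since $f_t(s^*_m) = F_{t+2}$ was already computed in the proof of Lemma \ref{starprops}, this strengthened claim implies the lemma. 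The base case $t = 0$ is immediate from $L^{(0)} = (1, 2) = (F_2, F_3)$.

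For the inductive step, I would case-split on the length $j_{t-1}$ of $L^{(t-1)}$ and the number $k \ge 0$ of losses between the $(t-1)$-th and $t$-th wins. When $j_{t-1} \ge 2$, a direct computation of $L^{(t)}$ shows its new first entry equals the old $a^{(t-1)}_2 \le F_{t+2}$, and its new second entry (when present) is either $a^{(t-1)}_3$ (for $j_{t-1} \ge 3$) or $a^{(t-1)}_1 + a^{(t-1)}_2$ (for $j_{t-1} = 2$). Using the preserved relation $a^{(t-1)}_3 \le a^{(t-1)}_1 + a^{(t-1)}_2$ from Lemma \ref{basics}(3) along with the inductive hypothesis, both are bounded by $F_{t+1} + F_{t+2} = F_{t+3}$.

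The subtler case $j_{t-1} = 1$ is, I expect, the main obstacle: naively one might fear the new first entry doubles. The resolution is that the bet on a singleton list is $b(L) = a_1$, not $2 a_1$, so $k$ losses turn $(a_1)$ into $(a_1, a_1, 2 a_1, 3 a_1, \ldots, k a_1)$ and the ensuing win leaves $(a_1, 2 a_1, \ldots, (k-2) a_1)$. Consequently the first entry is preserved, and, when it exists, the new second entry equals $2 a^{(t-1)}_1 \le 2 F_{t+1} \le F_{t+3}$, the latter inequality following from $F_{t+3} = F_{t+2} + F_{t+1} \ge 2 F_{t+1}$. This closes the induction.
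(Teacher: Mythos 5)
Your argument is correct and essentially matches the paper's proof: the paper likewise inducts on $t$ with a two-term Fibonacci-type hypothesis, invokes Lemma \ref{basics}(3) at the same point to bound the entry that becomes the next first element, and isolates the same degenerate short-list situation (your $j_{t-1}=1$ case is the paper's forced-loss case $k=1$); the only difference is bookkeeping, since you bound the first two entries of the post-win list by explicit Fibonacci numbers instead of comparing $f_t(s), f_{t-1}(s)$ to $f_t(s^*_m), f_{t-1}(s^*_m)$ and proving $f_{t+1}(s)\leq f_t(s)+f_{t-1}(s)$. One cosmetic slip: $k$ losses turn $(a_1)$ into $(a_1,a_1,2a_1,\ldots,(k-1)a_1)$, with last entry $(k-1)a_1$ rather than $ka_1$, though your post-win list $(a_1,2a_1,\ldots,(k-2)a_1)$ and the resulting bounds are correct.
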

\begin{proof}
We prove this statement by induction on $t$.
\begin{itemize}
\item For all $s \in S_m$, $$f_0(s) =f_0(s^*_m) = 1.$$ 
\item Since we only append numbers to the end of the list, right after the first win, no matter when it occurs, $2$ will become the first number on the list, so 
$$f_1(s) = f_1(s^*_m) = 2$$ for all $s$. 
\item Suppose for $1 \leq t < m$, we know that 
$$f_t(s) \leq f_t(s^*_m) \text{ and }f_{t-1}(s) \leq f_{t-1}(s^*_m).$$ Recall from the proof Lemma \ref{starprops} that 
$$f_{t + 1}(s^*_m) = f_{t}(s^*_m) + f_{t - 1}(s^*_m).$$ We claim that for any other $s \in S_m$, 
$$f_{t + 1}(s) \leq f_{t}(s) + f_{t - 1}(s)$$ (which would conclude the inductive step). Let $L = (f_{t-1}(s), f_t(s), x_1, \ldots, x_k)$ be the state of the list right before the $t^\text{th}$ winning turn. Note that $k \geq 1$ because we assumed the $t^\text{th}$ turn doesn't end the game (i.e.\ $t < m$). By Lemma \ref{basics}.$3$, we know that $$x_1\leq f_{t - 1}(s) + f_t(s).$$ There are two possibilities: 
\begin{enumerate}
\item If $k \geq 2$, after the $t^\text{th}$ win the list becomes $(f_t(s), x_1, \ldots, x_{k - 1})$, and hence 
$$f_{t + 1}(s) = x_1 \leq f_t(s) + f_{t - 1}(s).$$ 
\item If $k = 1$, then after the $t^\text{th}$ win the list becomes $(f_t(s))$ and then after the (forced) loss on the next turn the list becomes $(f_t(s), f_t(s))$, so 
$$f_{t + 1}(s) = f_t(s) \leq f_t(s) + f_{t-1}(s).$$ 
\end{enumerate}
This completes the inductive step. \vspace{-0.2in}
\end{itemize}
\end{proof}

Collecting the results above together gives us the following Theorem:
\begin{theorem}\label{Mnbound}
Let $L = (1, 2)$ and let $N = 3m + 1$ or $N = 3m$ be the stopping time. Then: 
$$M \ll m \pphi^m.$$
\end{theorem}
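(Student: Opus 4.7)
The plan is to show that in any realization of the game starting from $(1,2)$, the first entry of the current list is always $O(\pphi^m)$ and the current length is always $O(m)$; combining these with Lemma \ref{basics}(3) will give a bet of size $O(m\pphi^m)$ at every step, hence the same bound on $M$.

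First I would count wins and losses. Starting from $(1,2)$, let $a$ be the number of wins applied when the list has length $\geq 2$, let $b\in\{0,1\}$ be the number of wins applied when the list has length $1$, and let $L$ be the number of losses. Balancing the list length yields $2=2a+b-L$, and therefore $N=a+b+L=3a+2b-2$. Hence $N=3m+1$ forces $b=0$ and $a=m+1$, while $N=3m$ forces $b=1$ and $a=m$; in either case the game contains exactly $m+1$ wins and at most $2m$ losses.

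Second, I would locate a prefix of the game inside $S_m$ so that Lemma \ref{smallestnumber} applies. Let $s'$ be the prefix of the game up to (but not including) the $(m+1)$-st win; then $s'$ has exactly $m$ wins. In the $N=3m+1$ case, $s'$ contains all $2m$ losses and leaves a list of length $2$, so $s'\in S_m$. In the $N=3m$ case, $s'$ contains $2m-1$ losses and leaves a list of length $1$, so appending one $\textbf{L}$ gives $s'\cdot\textbf{L}\in S_m$; since $f_t$ depends only on moves up through the $t$-th win, $f_t(s')=f_t(s'\cdot\textbf{L})$ for all $t\leq m$. In either case Lemma \ref{smallestnumber} and Lemma \ref{starprops} give
$$f_t(s')\leq f_t(s^*_m)\ll \pphi^t\leq \pphi^m \quad \text{for all } 0\leq t\leq m.$$

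Finally I would assemble the bound on $M$. The first element of the current list changes only at wins, so at every moment in the game it equals $f_t(s')$ for some $t\in\{0,1,\dots,m\}$ (the $(m+1)$-st win empties the list in both cases, so no later $f$-value is ever needed). The starting list $(1,2)$ satisfies the hypotheses of Lemma \ref{basics}(3), so the current list $(\ell_1,\ldots,\ell_j)$ always satisfies $\ell_j\leq j\ell_1$, whence the bet is at most $(j+1)\ell_1$. Since $j$ is bounded by the starting length plus the total number of losses, we have $j\leq 2+2m\leq N+2$, and therefore
$$M\leq (N+2)\max_{0\leq t\leq m}f_t(s')\ll m\cdot\pphi^m.$$
The main obstacle is the bookkeeping in the second step: verifying that the natural prefix of the game (or its one-letter extension by $\textbf{L}$ in the $N=3m$ case) actually belongs to $S_m$, so that the maximality of $s^*_m$ established in Lemma \ref{smallestnumber} can be invoked. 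Once this is in place, the remaining pieces reduce to the structural estimates already collected in Lemma \ref{basics}.
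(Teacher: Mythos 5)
Your proposal is correct and follows essentially the same route as the paper's proof: determine the win/loss counts from $N$, place the pre-final-win prefix (padded by an extra $\textbf{L}$ when $N=3m$) into $S_m$ so that Lemma \ref{smallestnumber} and Lemma \ref{starprops} bound the first list entries by $O(\pphi^m)$, bound the list length by $2m+2$, and use Lemma \ref{basics}(3) to convert this into the bound $M \ll m\pphi^m$. The only differences are cosmetic (your explicit length-balance count of wins, and bounding the bet by $(j+1)\ell_1$ rather than via the maximal entry $T$ with $M\leq 2T$).
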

\begin{proof}
First, suppose $N = 3m + 1$. It is easy to see that, in such a game, there have to be exactly $2m$ losses and $m + 1$ wins. Since the last turn has to be winning, the sequence $s$ consisting of the first $3m$ turns of this game is in $S_m$.

Let $T$ denote the maximal number appearing in any of the lists that happen throughout this game (or, equivalently, during the first $3m$ turns of the game). Note that $M \leq 2T$, so it is equivalent to prove that $T \ll m \pphi^m.$

Now, note that since there are a total of $2m$ losses, the length of the list at any point of the game cannot exceed $2m+2$. Moreover, due to Lemma \ref{basics}, if $f$ is the first number on the list of length $j \leq 2m + 2$, then the largest number on the list is bounded from above by $jf \leq (2m + 2)f$. Hence by Lemma \ref{smallestnumber}, 
$$T \leq \sup_{0 \leq t \leq m} (2m + 2) f_t(s) \ll m f_t(s^*_m) \ll m \pphi^m,$$ where the last inequality follows from Lemma \ref{starprops}.

Similarly, for $N = 3m $, we know that there have to be $2m-1$ losses and $m + 1$ wins. Excluding the last winning turn, we get a sequence $s$ of $2m - 1$ losses and $m$ wins: $$s = s_1 \ldots s_{3m - 1}, s_i \in \{\text{\textbf{W},\textbf{L}}\}.$$ Now, note that 
$$s\textbf{L} = s_1\ldots s_{3m - 1}\textbf{L} \in S_m,$$ and so by Lemma \ref{smallestnumber},  $f_t(s\textbf{L}) \leq f_t(s^*_m)$. But appending a loss at the end doesn't change the set of the first numbers on the list, so $f_t(s) = f_t(s\textbf{L}) \leq f_t(s^*)$.  The rest of the proof carries over from the $N = 3m + 1$ case.

\end{proof}

\section{Expectation of the maximal bet size}
\hspace{.22in} In this section we collect the results from the two previous sections to prove Theorem \ref{main}.
We fist prove this for the list $L = (1, 2)$ and then show that it follows for any starting list.

\begin{lemma}\label{mainlemma}
The statement of Theorem \ref{main} holds for the games starting on the list $L = (1, 2)$. 
\end{lemma}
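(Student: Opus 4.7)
The plan is to combine the deterministic upper bound on $M$ from Theorem \ref{Mnbound} with the exact distribution of $N$ from Corollary \ref{lengthtwo}, and then verify that the resulting series converges precisely in the regime $p > c$. First I would split the expectation according to the value of the stopping time:
\[
E[M] \;=\; \sum_{m \geq 1} E\bigl[M \cdot \mathbb{1}_{\{N = 3m\}}\bigr] \;+\; \sum_{m \geq 0} E\bigl[M \cdot \mathbb{1}_{\{N = 3m+1\}}\bigr].
\]
On each of the events $\{N = 3m\}$ and $\{N = 3m+1\}$, Theorem \ref{Mnbound} gives the pointwise bound $M \ll m\,\pphi^m$, so each term in the above sum is dominated by $m\,\pphi^m \cdot P_2(N \in \{3m, 3m+1\})$.

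Next I would plug in the probabilities from Corollary \ref{lengthtwo} and use Stirling's formula on the central binomial-type coefficient $\binom{3m}{m}$. A short computation yields
\[
\binom{3m}{m} \;\sim\; \sqrt{\tfrac{3}{4\pi m}}\,\left(\tfrac{27}{4}\right)^m,
\]
and the same leading asymptotics hold for $\binom{3m+1}{m}$. Dividing by the extra factor of $m+1$ or $2m+1$ appearing in Corollary \ref{lengthtwo} and inserting the powers $p^{m+1}q^{2m-1}$ and $p^{m+1}q^{2m}$ gives
\[
P_2(N = 3m),\;\; P_2(N = 3m+1) \;\ll\; \frac{1}{m^{3/2}}\left(\frac{27\, p\, q^2}{4}\right)^m.
\]

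Combining these two estimates, the expectation is bounded by
\[
E[M] \;\ll\; \sum_{m \geq 1} m\,\pphi^m \cdot \frac{1}{m^{3/2}}\left(\frac{27\, p\, q^2}{4}\right)^m \;=\; \sum_{m \geq 1} \frac{1}{\sqrt{m}}\left(\frac{27\,\pphi\, p\,(1-p)^2}{4}\right)^m.
\]
This geometric-type series is finite if and only if the common ratio is strictly less than $1$, i.e.\ $p(1-p)^2 < \dfrac{4}{27\,\pphi} = \dfrac{8}{27(1+\sqrt{5})}$. Since $p(1-p)^2$ is decreasing on $(1/3, 1)$ and equals $\dfrac{8}{27(1+\sqrt{5})}$ exactly at $p = c$, the required inequality holds for every $p \in (c, 1]$, giving $E[M] < \infty$.

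The only genuinely delicate step is ensuring that the exponential growth rate $\pphi$ coming from the Fibonacci bound of Lemma \ref{starprops} is matched exactly against the exponential decay rate of $P_2(N \approx 3m)$ so that the critical threshold $c$ comes out of the relation $27\,\pphi\,p(1-p)^2 = 4$; the polynomial factors ($m$ from the bound on $M$, and $m^{-3/2}$ from Stirling) conveniently leave only a harmless $m^{-1/2}$, which cannot affect convergence. Everything else reduces to standard Stirling bookkeeping.
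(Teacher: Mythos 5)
Your proposal is correct and follows essentially the same route as the paper: bound $M$ on $\{N=3m\}$ and $\{N=3m+1\}$ by $m\pphi^m$ via Theorem \ref{Mnbound}, insert the exact distribution from Corollary \ref{lengthtwo}, apply Stirling, and compare the ratio $\tfrac{27}{4}\pphi\, p q^2$ with $1$ to recover the threshold $c$. Your added observation that $p(1-p)^2$ is decreasing on $(1/3,1]$ is a slightly more explicit justification of the final step than the paper's ``solving the cubic,'' but the argument is the same.
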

\begin{proof}
Let $M_N$ denote the largest number that can be bet in a game of length $N$ starting with the list $(1, 2)$. By Theorem \ref{Mnbound}, we know that 
$$M_N \ll N \pphi^{N/3},$$ where $\pphi$ is the golden ratio. Hence, by Corollary \ref{lengthtwo}, 
\begin{align*}
E(M) &\leq \sum_{n\geq 1} P(N = n)M_n \\
&\ll \sum_{m\geq 1}\frac{1}{2m +1} {3m \choose m} p^{m + 1} q^{2m-1} m\pphi^m+  \sum_{m\geq 0} \frac{1}{m + 1}{3m +1\choose m} p^{m + 1} q^{2m} m \pphi^m
\end{align*}
Now, using Stirling's approximation, 
$${3m \choose m} \ll \frac{\sqrt{m} \left( \frac{3m}{e}\right)^{3m}}{m \left( \frac{2m}{e}\right)^{2m}\left( \frac{m}{e}\right)^{m}}= \frac{3^{3m}}{2^{2m} \sqrt{m}}$$ and similarly 
$${3m + 1 \choose m} \ll \frac{3^{3m}}{2^{2m}\sqrt{m}}$$ as well. Hence, 
\begin{align*}
E[M] &\ll \sum_{m\geq 0}\frac{1}{2m +1} \frac{3^{3m}}{2^{2m}\sqrt{m}} p^{m + 1} q^{2m-1} m\pphi^m+  \sum_{m\geq 0} \frac{1}{m + 1}\frac{3^{3m}}{2^{2m}\sqrt{m}}p^{m + 1} q^{2m} m \pphi^m\\
&\ll \sum_{m\geq 0} \left(\frac{27 \pphi }{4}\right)^m  (pq^2)^m.
\end{align*}
This sum converges if and only if
 $$pq^2 < \frac{4}{27\pphi}.$$
Solving this cubic equation for $p \in [1/3, 1]$ yields 
$$p \in (c, 1], \text{ where }c \approx 0.613763$$ 
as aimed. 
\end{proof}

We now generalize this result for any other list. 

\begin{theorem}
Fix $p \in [1/3, 1]$ and suppose for the games starting on the list $(1, 2)$ we have that $E[M] < \infty.$ Then for any other starting list $L = (\ell_1, \ldots, \ell_j)$, $E[M] < \infty.$

\end{theorem}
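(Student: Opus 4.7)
The plan is to reduce the question for an arbitrary starting list $L = (\ell_1,\ldots,\ell_j)$ to the already-established case $L = (1,2)$, via a monotonicity and scaling argument combined with a conditioning trick. Write $M(L)$ for the maximal bet in the game starting from $L$, and set $c := \max_i \ell_i$.

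First, I would dominate $L$ by a list that naturally arises as a state of the game starting from $(c, 2c)$. A direct computation shows that $j-2$ consecutive losses starting from $(c, 2c)$ produce the list $(c, 2c, 3c, \ldots, jc)$, since each loss appends $c + (\text{last entry})$, yielding an arithmetic progression with common difference $c$. Because $\ell_i \leq c \leq ic$ for all $i$, the list $L$ is dominated entrywise by $(c, 2c, \ldots, jc)$. The list-length dynamics depend only on the sequence of wins and losses and not on the actual entry values, so both games share the same stopping time and any sequence playable on one is playable on the other. Applying Lemma \ref{basics}.2, every bet in the game from $L$ is bounded by the corresponding bet in the game from $(c, 2c, \ldots, jc)$, hence $M(L) \leq M((c, 2c, \ldots, jc))$ almost surely.

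Next, I would bound $E[M((c, 2c, \ldots, jc))]$ by relating it back to the game from $(c, 2c)$. The key observation is that $(c, 2c, \ldots, jc)$ is precisely the state of the $(c, 2c)$-game after the event $A = \{X_1 = \cdots = X_{j-2} = -1\}$, which has probability $q^{j-2}$. The maximal bet over the continuation of this game after time $j-2$ is independent of $A$, has the same distribution as $M((c, 2c, \ldots, jc))$, and is bounded above by $M((c, 2c))$ on $A$. Hence $E[M((c, 2c, \ldots, jc))] \leq E[M((c, 2c))]/q^{j-2}$. Finally, scale invariance of the Labouchere dynamics gives $E[M((c, 2c))] = c\, E[M((1, 2))]$, which is finite by Lemma \ref{mainlemma}. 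Combining the inequalities produces $E[M(L)] \leq c\, E[M((1,2))] / q^{j-2} < \infty$, as desired.

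The main obstacle is conceptual rather than technical: one must recognize that although an arbitrary $L$ is not itself a state of the $(1,2)$-game, rescaling by $c$ produces a starting list that is dominated by a genuine state of the $(c, 2c)$-game after a deterministic sequence of losses. Once this bridge is in place, the remaining ingredients are the monotonicity recorded in Lemma \ref{basics} and the scale invariance of the dynamics; no new probabilistic input beyond Lemma \ref{mainlemma} is needed.
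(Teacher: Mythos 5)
Your argument is essentially the paper's own: the same conditioning trick (an initial block of $j-2$ forced losses, costing a factor $q^{j-2}$), the same entrywise domination via Lemma \ref{basics}.2, and the same scale invariance; the only cosmetic difference is that you rescale $(1,2)$ to $(c,2c)$ before playing the losses, whereas the paper plays the losses on $(1,2)$ to reach $X=(1,2,\ldots,j)$ and rescales to $cX$ afterward. These are equivalent, and your chain $E[M(L)] \leq E[M((c,2c,\ldots,jc))] \leq q^{-(j-2)}E[M((c,2c))] = c\,q^{-(j-2)}E[M((1,2))]$ is correct for $j \geq 2$ (including the observation that list-length dynamics, hence playability and stopping, do not depend on the entries, which is what licenses the application of Lemma \ref{basics}.2 bet-by-bet).

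There is one gap: the statement allows $j=1$, and your construction does not cover it. For $j=1$ the phrase ``the state of the $(c,2c)$-game after $j-2$ consecutive losses'' is vacuous, and you cannot instead dominate $(\ell_1)$ directly by a length-$2$ list, because Lemma \ref{basics}.2 only compares lists of equal length (indeed the bet rule itself differs for length-$1$ lists, where the bet is $\ell_1$ rather than a sum of two entries). The paper handles this case separately: the $(1,2)$-game passes through the state $(2)$ after a loss followed by a win, which occurs with probability $pq$, so $pq\,E_{(2)}[M] \leq E_{(1,2)}[M] < \infty$, and then scaling gives $E_{(\ell_1)}[M] = (\ell_1/2)\,E_{(2)}[M] < \infty$. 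Adding this two-line case to your proof makes it complete.
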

\begin{proof}
For a list $L$, let $E_L[M]$ denote the expectation of the largest bet for a game with the starting list $L$. Fix $j \geq 2$ and consider a game played on $(1, 2)$ which begins with $j - 2$ consecutive losses (which happens with probability $q^{j - 2}$.) After those losses are played out, $(1, 2)$ is updated to $X= (1, 2, 3, \ldots, j)$. Now, note that 
$$E_{(1, 2)}[M] \geq q^{j - 2} E_{X}[M],$$ which by assumption implies that 
$$E_{X}[M] < \infty.$$ 
Next, consider any list $L = (\ell_1, \ldots, \ell_j)$ of length $j$. Let $c$ be a constant such that
$$c i > \ell_i, i \in \{1, \ldots, j\}.$$ For the list $cX := (c, 2c,\ldots, jc )$, we clearly have 
$$E_{cX}[M] = c E_{X}[M] < \infty.$$ Hence, by Lemma \ref{basics} part $2$, it follows that 
$$E_{L}[M] \leq E_{c X}[M] < \infty $$ as well. This proves the claim for $j \geq 2$. 

Lastly, for a list $(c)$ of length $1$, note that we have $$pq E_{(2)}[M] \leq E_{(1, 2)}[M] < \infty$$ and hence 
$$E_{(c)}[M] = (c/2) E_{(2)}[M] < \infty$$

\end{proof}

This concludes the proof of Theorem \ref{main}.

\section{Acknowledgments}

I want to thank Persi Diaconis for teaching a wonderful class that this paper came out of, Stewart Ethier for kindly sharing his thoughts and notes on the topic, and Thomas Church, Levent Alpoge and Stewart Ethier for very helpful comments on this paper. 

\bibliography{Labouchere}
\bibliographystyle{plain}

\end{document}